\newtheorem{theorem}{Theorem}[section]
\newtheorem{lemma}{Lemma}[section]
\newtheorem{remark}{Remark}[section]
\begin{document}

\title{A new estimate for a quantity involving the Chebyshev polynomials of the first kind}
\author{Xuefeng Xu%
\thanks{Corresponding author. LSEC, Institute of Computational Mathematics and Scientific/Engineering Computing, Academy of Mathematics and Systems Science, Chinese Academy of Sciences, Beijing 100190, China, and School of Mathematical Sciences, University of Chinese Academy of Sciences, Beijing 100049, China (\texttt{xuxuefeng@lsec.cc.ac.cn}).} 
\quad and \quad 
Chen-Song Zhang%
\thanks{LSEC \& NCMIS, Academy of Mathematics and Systems Science, Chinese Academy of Sciences, Beijing 100190, China (\texttt{zhangcs@lsec.cc.ac.cn).}}}
\date{\today}
\maketitle
	
\begin{abstract}

In this paper, we establish a new estimate (including lower and upper bounds) for an important quantity involved in the convergence analysis of smoothed aggregation algebraic multigrid methods. The new upper bound improves the existing ones. And our upper bound is optimal.

\end{abstract}
	
\noindent{\bf Keywords:} Chebyshev polynomials, recurrence relation, smoothed aggregation
	
\smallskip
	
\noindent{\bf AMS subject classifications:} 41A50, 65N55

\section{Introduction}

The celebrated Chebyshev polynomials have a wide range of applications in many fields such as numerical analysis, differential equations, approximation theory, and number theory (see, e.g.,~\cite{Fox1968,Rivlin1990,Mason2003}). Some extensions and recent developments of the Chebyshev polynomials can be found, e.g., in~\cite{Witula2006,Moreno2012,Kroo2013,Totik2014,Tatarczak2016,Mesk2017,Schiefermayr2017}. As is well known, the Chebyshev polynomials of the first kind are defined by the recurrence relation
\begin{equation}\label{Cheby}
T_{0}(t)=1, \quad T_{1}(t)=t, \quad T_{k+1}(t)=2tT_{k}(t)-T_{k-1}(t) \quad k=1,2,\ldots
\end{equation}
In particular, if $t\in[-1,1]$, $T_{k}(t)$ can be explicitly expressed as
\begin{equation}\label{Cheby2}
T_{k}(t)=\cos(k\arccos t).
\end{equation}

Smoothed aggregation algebraic multigrid (SA-AMG) has become a popular method for solving large sparse linear systems arising from the discretizations of elliptic partial differential equations; see, e.g.,~\cite{Vanek1996,Vanek2001,Brezina2012,Vanek2012,Hu2016}. The convergence analysis of SA-AMG developed in~\cite{Brezina2012} involves an important quantity $\frac{C_{\nu}}{2\nu+1}$ ($\nu=1,2,\ldots$), where
\begin{equation}\label{C_v}
C_{\nu}:=\sup_{t\in(0,1]}\frac{\left|1-\frac{(-1)^{\nu}T_{2\nu+1}(\sqrt{t})}{(2\nu+1)\sqrt{t}}\right|}{\sqrt{t}}.
\end{equation}
It was proved by Brezina, Van\v{e}k, and Vassilevski~\cite[Proposition A2]{Brezina2012} that, for any positive integer $\nu$,
\begin{equation}\label{old1}
\frac{C_{\nu}}{2\nu+1}\leq 2, 
\end{equation}
which plays a key role in the convergence analysis of SA-AMG. Recently, Guo and Xiang~\cite[Proposition 2.2]{Guo2014} sharpened the estimate~\eqref{old1} and obtained that, for any positive integer $\nu$,
\begin{equation}\label{old2}
\frac{C_{\nu}}{2\nu+1}\leq 0.885.
\end{equation}

From the numerical results in Table~\ref{tab:1} below, we observe that the optimal upper bound for $\frac{C_{\nu}}{2\nu+1}$ is likely to be $\frac{C_{1}}{3}=\frac{4}{9}$. In this paper, we prove this conjecture and derive that
\begin{equation}\label{imp_est}
\frac{1}{4}\leq\frac{C_{\nu}}{2\nu+1}\leq\frac{4}{9}
\end{equation}
for any positive integer $\nu$. Obviously, the upper bound in~\eqref{imp_est} improves the existing ones in~\eqref{old1} and~\eqref{old2}. Moreover, the lower bound in~\eqref{imp_est} indicates that the (nonnegative) quantity $\frac{C_{\nu}}{2\nu+1}$ does not approach zero when $\nu$ is very large.

\begin{table}[htbp]
\centering
\begin{tabular}{@{} cccccc @{}}
\toprule
$\nu$  &  $\frac{C_{\nu}}{2\nu+1}$  &  $\nu$  &  $\frac{C_{\nu}}{2\nu+1}$  &  $\nu$  &  $\frac{C_{\nu}}{2\nu+1}$  \\
\midrule
1  &  0.444444444444  &  11  &  0.319308836395  &  21  &  0.318593810687  \\
2  &  0.344265186330  &  12  &  0.319154197729  &  22  &  0.318569074526  \\
3  &  0.330166016890  &  13  &  0.319032940515  &  23  &  0.318547437753  \\
4  &  0.325185126688  &  14  &  0.318936089320  &  24  &  0.318528403079  \\
5  &  0.322817680242  &  15  &  0.318857498690  &  25  &  0.318511569147  \\
6  &  0.321499599981  &  16  &  0.318792844850  &  26  &  0.318496609147  \\
7  &  0.320688247098  &  17  &  0.318739013405  &  27  &  0.318483254782  \\
8  &  0.320152568605  &  18  &  0.318693714591  &  28  &  0.318471284130  \\
9  &  0.319780050865  &  19  &  0.318655234248  &  29  &  0.318460512343  \\
10 &  0.319510389628  &  20  &  0.318622268380  &  30  &  0.318450784453  \\
\bottomrule
\end{tabular}
\caption{Numerical evaluation of $\frac{C_{\nu}}{2\nu+1}$ with $1\leq \nu\leq 30$.}
\label{tab:1}
\end{table}

\section{Preliminaries}

\label{sec:pre}

\setcounter{equation}{0}

In this section, we review some useful properties of the Chebyshev polynomials of the first kind.

In view of~\eqref{Cheby2}, if $t\in[-1,1]$, then $|T_{k}(t)|\leq1$ for any $k$. In particular, if $t\in[0,1]$ and $k$ is an odd number, the following estimate (see, e.g.,~\cite[Proposition 6.26]{Vassilevski2008}) holds.

\begin{lemma}
For any $t\in[0,1]$, it holds that
\begin{equation}\label{est_T}
\left|T_{2k+1}(t)\right|\leq\min\big\{1,(2k+1)t\big\}.
\end{equation}
\end{lemma}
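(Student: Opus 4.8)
The plan is to treat the two members of the minimum separately, the bound by $1$ being essentially immediate and the bound by $(2k+1)t$ carrying all the content. For the first, since $t\in[0,1]\subset[-1,1]$, the explicit representation~\eqref{Cheby2} gives $|T_{2k+1}(t)|=|\cos((2k+1)\arccos t)|\le 1$ at once. It therefore remains to establish $|T_{2k+1}(t)|\le(2k+1)t$, after which the two estimates combine into~\eqref{est_T}.

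First I would rewrite $T_{2k+1}(t)$ in a form adapted to the odd index. Setting $t=\sin\phi$ with $\phi=\arcsin t\in[0,\tfrac{\pi}{2}]$, so that $\arccos t=\tfrac{\pi}{2}-\phi$, the representation~\eqref{Cheby2} yields
\[
T_{2k+1}(t)=\cos\!\Big((2k+1)\big(\tfrac{\pi}{2}-\phi\big)\Big).
\]
Expanding with the addition formula and using $\cos\big((2k+1)\tfrac{\pi}{2}\big)=0$ together with $\sin\big((2k+1)\tfrac{\pi}{2}\big)=(-1)^{k}$, this collapses to $T_{2k+1}(t)=(-1)^{k}\sin((2k+1)\phi)$. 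Since $\sin\phi=t\ge 0$, the desired inequality $|T_{2k+1}(t)|\le(2k+1)t$ is then exactly
\[
|\sin((2k+1)\phi)|\le(2k+1)\sin\phi.
\]

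The remaining ingredient is the classical estimate $|\sin(n\phi)|\le n|\sin\phi|$, valid for every nonnegative integer $n$ and every real $\phi$, which I would prove by induction on $n$. The base cases $n=0,1$ are trivial, and the inductive step follows from the addition formula $\sin((n+1)\phi)=\sin(n\phi)\cos\phi+\cos(n\phi)\sin\phi$ together with the triangle inequality and $|\cos\phi|\le 1$:
\[
|\sin((n+1)\phi)|\le|\sin(n\phi)|+|\sin\phi|\le n|\sin\phi|+|\sin\phi|=(n+1)|\sin\phi|.
\]
Specializing to $n=2k+1$ and recalling $\sin\phi=t\ge 0$ gives the required bound, completing the argument.

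The only genuinely delicate point is the complementary-angle reduction in the second step: recognizing that the odd-index cosine $\cos((2k+1)\theta)$ turns into a sine of the complementary angle is what converts the problem into the standard sine inequality, for which a bound by the \emph{linear} quantity $(2k+1)t$ (rather than by $1$) is the natural outcome. Once this reduction is in place, the induction is routine; a direct induction on the three-term recurrence~\eqref{Cheby} is possible but less clean, since it would couple the cosine and sine (equivalently, the first- and second-kind) terms and tend to obscure the sharp linear factor $2k+1$.
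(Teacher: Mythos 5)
Your argument is correct and complete: the reduction $T_{2k+1}(\sin\phi)=(-1)^k\sin((2k+1)\phi)$ is right (indeed $\cos\bigl((2k+1)\tfrac{\pi}{2}\bigr)=0$ and $\sin\bigl((2k+1)\tfrac{\pi}{2}\bigr)=(-1)^k$), and the induction giving $|\sin(n\phi)|\le n|\sin\phi|$ is the standard one. The paper itself offers no proof of this lemma, only a citation to Vassilevski's Proposition~6.26, and your argument is essentially the classical proof found there, so there is nothing to flag.
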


The following lemma (see, e.g.,~\cite[Proposition 6.25]{Vassilevski2008}) provides two special expressions for $T_{k}(t)$, which are needed in the proof of our main result.

\begin{lemma}\label{ex_T}
The Chebyshev polynomials defined by~\eqref{Cheby} have the following expressions:
\begin{subequations}
\begin{align}
T_{2k}(t)&=(-1)^{k}+P_{k}(t^{2}),\label{Te}\\
T_{2k-1}(t)&=(-1)^{k-1}(2k-1)t+tQ_{k-1}(t^{2}),\label{To}
\end{align}
\end{subequations}
where $P_{k}(0)=0$, $Q_{k}(0)=0$, and both $P_{k}(x)$ and $Q_{k}(x)$ are polynomials of degree $k$ (in terms of the variable $x$).
\end{lemma}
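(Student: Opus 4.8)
The plan is to exploit the parity of the Chebyshev polynomials together with two boundary evaluations at $t=0$. First I would record the parity relation $T_n(-t)=(-1)^{n}T_n(t)$: this follows immediately from~\eqref{Cheby2} on $[-1,1]$ (both sides are polynomials agreeing on an interval, hence everywhere), or directly by induction on the recurrence~\eqref{Cheby}. In particular $T_{2k}$ is an even polynomial of degree $2k$ and $T_{2k-1}$ is an odd polynomial of degree $2k-1$. Consequently there exist polynomials $R_k$ and $S_{k-1}$, of degrees $k$ and $k-1$ respectively, such that $T_{2k}(t)=R_k(t^2)$ and $T_{2k-1}(t)=t\,S_{k-1}(t^2)$.

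Next I would determine the constant term $R_k(0)=T_{2k}(0)$ and the linear coefficient $S_{k-1}(0)=T_{2k-1}'(0)$, which are exactly the quantities that must be split off in~\eqref{Te} and~\eqref{To}. Setting $t=0$ in~\eqref{Cheby} gives $T_{k+1}(0)=-T_{k-1}(0)$; with $T_0(0)=1$ and $T_1(0)=0$ this yields $T_{2k}(0)=(-1)^k$ (and $T_{2k+1}(0)=0$). For the linear coefficient I would differentiate~\eqref{Cheby2}, obtaining $T_n'(0)=n\sin(n\pi/2)$, and specialize to $n=2k-1$ to get $T_{2k-1}'(0)=(-1)^{k-1}(2k-1)$.

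Finally I would define $P_k(x):=R_k(x)-(-1)^k$ and $Q_{k-1}(x):=S_{k-1}(x)-(-1)^{k-1}(2k-1)$. By construction $P_k(0)=0$ and $Q_{k-1}(0)=0$, the degrees are preserved ($\deg P_k=k$ and $\deg Q_{k-1}=k-1$, since subtracting a constant does not change the leading term), and substituting back recovers exactly~\eqref{Te} and~\eqref{To}.

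The step I expect to require the most care is the evaluation of the linear coefficient $T_{2k-1}'(0)$. Differentiating the recurrence~\eqref{Cheby} rather than~\eqref{Cheby2} produces $T_{k+1}'(0)=2T_k(0)-T_{k-1}'(0)$, which couples the derivative values to the function values at the origin; one then has to run this coupled recursion (using the already-computed $T_k(0)$) to confirm that $T_{2k-1}'(0)=(-1)^{k-1}(2k-1)$. Using~\eqref{Cheby2} sidesteps this coupling, but only justifies the formula for $t\in[-1,1]$; since both sides are polynomials, the identity extends to all $t$, so either route is legitimate.
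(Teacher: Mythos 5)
Your argument is correct and complete. Note that the paper does not actually prove this lemma---it cites it from the literature (Vassilevski's book, Proposition 6.25)---so there is no in-paper proof to compare against; the standard reference argument proceeds by induction on the three-term recurrence~\eqref{Cheby}, constructing $P_k$ and $Q_k$ explicitly and yielding as a by-product the recursions~\eqref{ex_P}--\eqref{ex_Q} that the paper records in the next lemma and actually uses later. Your route is different and arguably cleaner: parity ($T_n(-t)=(-1)^nT_n(t)$) forces $T_{2k}(t)=R_k(t^2)$ and $T_{2k-1}(t)=tS_{k-1}(t^2)$, and then the two boundary evaluations $T_{2k}(0)=(-1)^k$ (from the recurrence at $t=0$) and $T_{2k-1}'(0)=(2k-1)\sin\bigl((2k-1)\pi/2\bigr)=(-1)^{k-1}(2k-1)$ pin down exactly the terms that must be split off. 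Your handling of the domain issue is also right: \eqref{Cheby2} only gives the identity on $[-1,1]$, but two polynomials agreeing on an interval agree everywhere, so the derivative computation at $t=0$ is legitimate. The only quibble is with the degree claims at the boundary index: $P_0\equiv 0$ and $Q_0\equiv 0$, so ``degree $k$'' must be read as ``degree at most $k$'' in that case (for $k\geq 1$ the leading coefficient $2^{2k-1}$ of $T_{2k}$ survives the subtraction of a constant, so the degree is exact); this is an imprecision already present in the statement rather than a flaw in your proof.
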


According to~\eqref{Cheby}, \eqref{Te}, and~\eqref{To}, one can readily deduce that
\begin{displaymath}
P_{0}(t^{2})=0, \quad P_{1}(t^{2})=2t^{2}, \quad Q_{0}(t^{2})=0, \quad \text{and} \quad Q_{1}(t^{2})=4t^{2}.
\end{displaymath}
Furthermore, $P_{k}(t^{2})$ and $Q_{k}(t^{2})$ have the following recursive relations~\cite[Proposition A1]{Brezina2012}.

\begin{lemma}
Let $P_{k}(t^{2})$ and $Q_{k}(t^{2})$ be given as in Lemma~\ref{ex_T}. Then
\begin{subequations}
\begin{align}
P_{k}(t^{2})&=-P_{k-1}(t^{2})+2(-1)^{k-1}(2k-1)t^{2}+2t^{2}Q_{k-1}(t^{2}),\label{ex_P}\\
Q_{k}(t^{2})&=(4t^{2}-1)Q_{k-1}(t^{2})-2P_{k-1}(t^{2})+4(-1)^{k-1}(2k-1)t^{2}.\label{ex_Q}
\end{align}
\end{subequations}
\end{lemma}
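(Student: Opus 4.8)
The plan is to derive both identities directly from the three-term recurrence~\eqref{Cheby} by substituting the parity-separated expressions~\eqref{Te} and~\eqref{To} of Lemma~\ref{ex_T} and then matching the constant part and the polynomial-in-$t^2$ part on each side. No induction is required: each of~\eqref{ex_P} and~\eqref{ex_Q} turns out to be an algebraic consequence of a single application of~\eqref{Cheby}, so the whole proof reduces to two careful substitutions followed by cancellation.

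First I would establish~\eqref{ex_P} by using the recurrence in the form $T_{2k}(t)=2tT_{2k-1}(t)-T_{2k-2}(t)$. Writing $T_{2k}(t)=(-1)^{k}+P_{k}(t^{2})$, $T_{2k-1}(t)=(-1)^{k-1}(2k-1)t+tQ_{k-1}(t^{2})$, and $T_{2k-2}(t)=(-1)^{k-1}+P_{k-1}(t^{2})$, the right-hand side expands to $2(-1)^{k-1}(2k-1)t^{2}+2t^{2}Q_{k-1}(t^{2})-(-1)^{k-1}-P_{k-1}(t^{2})$. Since $-(-1)^{k-1}=(-1)^{k}$, the constant term $(-1)^{k}$ on the two sides cancels, and what remains is precisely~\eqref{ex_P}. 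The only thing to watch at this stage is the uniform bookkeeping of the alternating signs $(-1)^{k}$ versus $(-1)^{k-1}$.

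Next I would treat~\eqref{ex_Q} in two stages. Applying~\eqref{Cheby} in the form $T_{2k+1}(t)=2tT_{2k}(t)-T_{2k-1}(t)$ and substituting $T_{2k+1}(t)=(-1)^{k}(2k+1)t+tQ_{k}(t^{2})$ together with the expressions above, the purely linear-in-$t$ contributions on the right collapse (again via $-(-1)^{k-1}=(-1)^{k}$) to $(-1)^{k}(2k+1)t$, which matches and cancels the corresponding term on the left. Dividing the remaining identity by the common factor $t$—legitimate because both sides are polynomials sharing that factor, so the quotient is again a polynomial identity rather than a merely pointwise one—yields the intermediate relation $Q_{k}(t^{2})=2P_{k}(t^{2})-Q_{k-1}(t^{2})$. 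Finally, substituting the already-proved expression~\eqref{ex_P} for $P_{k}(t^{2})$ into this relation and grouping the two $Q_{k-1}(t^{2})$ terms as $(4t^{2}-1)Q_{k-1}(t^{2})$ produces exactly~\eqref{ex_Q}.

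Since the argument is a direct substitution, there is no genuine obstacle; the only points demanding care are the consistent handling of the signs and the observation that cancelling the factor $t$ is a valid polynomial manipulation. I would state the latter explicitly to make the derivation of~\eqref{ex_Q} rigorous.
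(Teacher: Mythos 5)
Your derivation is correct and complete: both substitutions check out, the sign bookkeeping with $(-1)^{k}$ versus $(-1)^{k-1}$ is handled properly, the intermediate identity $Q_{k}(t^{2})=2P_{k}(t^{2})-Q_{k-1}(t^{2})$ follows exactly as you describe, and your remark that cancelling the common factor $t$ is a legitimate polynomial (not merely pointwise) operation is the right level of care. The paper itself does not prove this lemma at all---it simply cites Proposition~A1 of Brezina, Van\v{e}k, and Vassilevski---so your argument supplies the standard substitution proof that the citation points to; there is nothing to reconcile between the two approaches.
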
 

\section{Main result}

\label{sec:main}

\setcounter{equation}{0}

We are now in a position to present the main result of this paper.

\begin{theorem}
For any positive integer $\nu$, the quantity $C_{\nu}$ defined by~\eqref{C_v} satisfies that
\begin{equation}\label{est}
\frac{1}{4}\leq\frac{C_{\nu}}{2\nu+1}\leq\frac{4}{9}.
\end{equation}
\end{theorem}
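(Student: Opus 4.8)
The plan is to first reduce the supremum defining $C_\nu$ to a transparent trigonometric expression, and then to prove the two bounds separately, with the upper bound carrying essentially all the difficulty. Writing $n=2\nu+1$ and substituting $s=\sqrt t$, I would use \eqref{To} with $k=\nu+1$, namely $T_{n}(s)=(-1)^{\nu}ns+sQ_{\nu}(s^{2})$, to see that the bracketed numerator collapses: $1-\frac{(-1)^{\nu}T_{n}(s)}{ns}=-\frac{(-1)^{\nu}}{n}Q_{\nu}(s^{2})$, so that $\frac{C_{\nu}}{n}=\sup_{s\in(0,1]}\frac{|Q_{\nu}(s^{2})|}{n^{2}s}$. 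Next I would put $s=\sin\epsilon$ with $\epsilon\in(0,\pi/2]$; since $n$ is odd, \eqref{Cheby2} gives $T_{n}(\sin\epsilon)=(-1)^{\nu}\sin(n\epsilon)$, and hence
\[
\frac{C_{\nu}}{2\nu+1}=\sup_{\epsilon\in(0,\pi/2]}\frac{n\sin\epsilon-\sin(n\epsilon)}{n^{2}\sin^{2}\epsilon}.
\]
The numerator is nonnegative because $|\sin(n\epsilon)|\le n\sin\epsilon$ (the elementary inequality $|\sin(n\epsilon)|\le n|\sin\epsilon|$), so the absolute value drops. I will abbreviate the integrand by $\Phi_{n}(\epsilon)$ and set $w=n\sin\epsilon$, $\theta=n\epsilon$; both bounds in \eqref{est} are read off this single formula.

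For the lower bound I would simply evaluate $\Phi_{n}$ at the one point $\epsilon_{0}=\frac{3\pi}{2n}$, which lies in $(0,\pi/2]$ exactly because $n\ge3$. There $\sin(n\epsilon_{0})=\sin\frac{3\pi}{2}=-1$, so $\Phi_{n}(\epsilon_{0})=\frac{w_{0}+1}{w_{0}^{2}}$ with $w_{0}=n\sin\epsilon_{0}$. Using $\sin\epsilon_{0}\le\epsilon_{0}$ gives $w_{0}\le n\epsilon_{0}=\frac{3\pi}{2}$, and since $b\mapsto\frac{b+1}{b^{2}}$ is decreasing we obtain $\frac{C_{\nu}}{2\nu+1}\ge\Phi_{n}(\epsilon_{0})\ge\frac{6\pi+4}{9\pi^{2}}$, which exceeds $\frac14$ because $24\pi+16>9\pi^{2}$. (The same evaluation suggests the true infimum is $\frac1\pi$, consistent with Table~\ref{tab:1}.)

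For the upper bound I must show $w-\sin\theta\le\frac49 w^{2}$ for every $\epsilon$ and every odd $n\ge3$, and I would split into three regimes. First, if $w\ge3$ then $\sin\theta\ge-1$ yields $w-\sin\theta\le w+1\le\frac49 w^{2}$, the last step being $(4w+3)(w-3)\ge0$. Second, if $\sin\theta\ge\frac{9}{16}$ then $\frac49 w^{2}-w\ge-\frac{9}{16}$ (the minimum of that quadratic), so $\frac49 w^{2}-w+\sin\theta\ge0$; this disposes of the bulk $\theta\in[\arcsin\frac9{16},\pi-\arcsin\frac9{16}]$. The leftover set is $w<3$ and $\sin\theta<\frac9{16}$; combined with Jordan's inequality $w\ge\frac{2}{\pi}\theta$, the constraint $w<3$ forces $\theta<\frac{3\pi}{2}$, confining the leftover $\theta$ to a neighborhood of $0$ and a neighborhood of $\pi$. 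Near $0$ (say $\theta\le\frac{32}{3\pi^{2}}$) the cubic estimate $w-\sin\theta\le\frac{\theta^{3}}{6}$ together with $w^{2}\ge\frac{4}{\pi^{2}}\theta^{2}$ closes the inequality. For $\theta$ just above $\pi$ I would exploit that, for fixed $\theta$, the quantity $w=n\sin(\theta/n)$ is increasing in $n$ (its derivative in $n$ equals $\sin(\theta/n)-(\theta/n)\cos(\theta/n)>0$ for $\theta/n\in(0,\pi)$) while $\frac49 w^{2}-w$ is increasing for $w>\frac98$; this monotonicity forces $\Phi_{n}(\epsilon)\le\frac49$ for all $n\ge5$ from the single base case $n=5$, which is an explicit one-variable inequality on a bounded interval of $\theta$ near $\pi$ to be checked directly, while $n=3$ is immediate from $\Phi_{3}(\epsilon)=\frac49\sin\epsilon\le\frac49$.

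The main obstacle is precisely this last regime, $\theta=(2\nu+1)\epsilon$ just above $\pi$, where $\Phi_{n}$ is close to its limiting value $\frac1\pi\approx0.318$ and therefore not far below $\frac49\approx0.444$: replacing $\sin\theta$ by its crude bound $-1$, or using the cubic Taylor estimate, each leaves a genuine positive gap there. The decisive device is the monotonicity of $w=n\sin(\theta/n)$ in $n$, which collapses the continuum of $\nu$ to the two smallest cases $n=3,5$; verifying that monotonicity cleanly, and confirming that the stubborn region is confined to $\theta<\frac{3\pi}{2}$, are the crux of the argument. Finally, optimality of the constant $\frac49$ is witnessed by $\nu=1$, where the supremum is attained at $\epsilon=\frac{\pi}{2}$.
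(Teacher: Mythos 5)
Your argument is correct in outline and takes a genuinely different route from the paper. The paper stays on the polynomial side: it keeps the representation $T_{2\nu+1}(t)=(-1)^{\nu}(2\nu+1)t+tQ_{\nu}(t^{2})$, derives recurrences for $Q_{k}$ and an auxiliary $R_{k}$, splits $(0,1]$ at a tunable point $\frac{\alpha}{2\nu+1}$, bounds the two pieces by explicit rational functions of $\nu$ (optimizing over two parameters $\alpha,\beta$), and, crucially, only runs this machinery for $\nu\geq 31$, relying on the numerical Table~\ref{tab:1} for $1\leq\nu\leq30$. You instead push the substitution $s=\sin\epsilon$ through \eqref{Cheby2} to get the clean identity $\frac{C_{\nu}}{2\nu+1}=\sup_{\epsilon\in(0,\pi/2]}\frac{n\sin\epsilon-\sin(n\epsilon)}{n^{2}\sin^{2}\epsilon}$ with $n=2\nu+1$, and then prove $w-\sin\theta\leq\frac49w^{2}$ (where $w=n\sin\epsilon$, $\theta=n\epsilon$) by a three-way case split; the regime $w\geq3$, the regime $\sin\theta\geq\frac9{16}$, the Jordan-inequality localization of the remainder to $\theta<\frac{3\pi}{2}$, the cubic Taylor bound near $\theta=0$, and the monotonicity of $n\mapsto n\sin(\theta/n)$ are all verified correctly, and your lower-bound evaluation at $n\epsilon_{0}=\frac{3\pi}{2}$ gives $\frac{6\pi+4}{9\pi^{2}}>\frac14$ (the paper instead evaluates at $t=\frac{2}{2\nu+1}$ and uses $|T_{2\nu+1}|\leq1$). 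What your approach buys is uniformity in $\nu$ (no appeal to a numerical table) and an explanation of where the supremum lives ($\theta$ just above $\pi$, limiting value $\frac1\pi$, matching Table~\ref{tab:1}); what the paper's approach buys is that every step is a finite algebraic computation once the table is accepted. One item in your write-up is still only asserted, not proved: the $n=5$ base case, i.e. the inequality $\frac{100}{9}\sin^{2}\!\big(\tfrac{\theta}{5}\big)-5\sin\!\big(\tfrac{\theta}{5}\big)+\sin\theta\geq0$ on $\big[\pi-\arcsin\tfrac9{16},\tfrac{3\pi}{2}\big]$. It is true with a comfortable margin (the left-hand side stays above $0.7$ on that interval, e.g. it equals about $0.90$ at $\theta=\pi$), and since $5\sin(\theta/5)\geq 5\sin(0.50)>2.39>\tfrac98$ there, an elementary verification via a few monotonicity estimates or a subdivision of the interval will close it; but as written this step plays the same role as the paper's Table~\ref{tab:1} and needs to be carried out explicitly before the proof is complete.
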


\begin{proof}
(i) \textit{Lower bound}: The lower bound in~\eqref{est} follows from the relation
\begin{displaymath}
\frac{C_{\nu}}{2\nu+1}=\sup_{t\in(0,1]}\frac{\big|(-1)^{\nu}(2\nu+1)t-T_{2\nu+1}(t)\big|}{[(2\nu+1)t]^{2}}\geq\frac{2-\left|T_{2\nu+1}\left(\frac{2}{2\nu+1}\right)\right|}{4}\geq\frac{1}{4}.
\end{displaymath}
	
(ii) \textit{Upper bound}: Table~\ref{tab:1} shows that the upper bound in~\eqref{est} is valid when $1\leq\nu\leq30$. In what follows, we focus on the case $\nu\geq31$.

Let $\alpha>1$ and $\beta>1$ be two undetermined parameters such that
\begin{equation}\label{para1}
\frac{2\nu+1}{2\big\lfloor\frac{\nu}{\beta}\big\rfloor+1}\leq\alpha\leq\frac{2\nu+1}{\sqrt{2}},
\end{equation}
where $\lfloor m \rfloor$ denotes the largest integer that is not larger than $m$. Partitioning the interval $(0,1]$ into the disjoint parts $\big(0,\frac{\alpha}{2\nu+1}\big]$ and $\big(\frac{\alpha}{2\nu+1},1\big]$, we then have
\begin{equation}\label{decom}
\frac{C_{\nu}}{2\nu+1}=\max\big\{S_{1}(\alpha),S_{2}(\alpha)\big\},
\end{equation}
where
\begin{align*}
S_{1}(\alpha)&:=\sup_{t\in \big(0,\frac{\alpha}{2\nu+1}\big]}\frac{\big|(-1)^{\nu}(2\nu+1)t-T_{2\nu+1}(t)\big|}{[(2\nu+1)t]^{2}},\\ S_{2}(\alpha)&:=\sup_{t\in \big(\frac{\alpha}{2\nu+1},1\big]}\frac{\big|(-1)^{\nu}(2\nu+1)t-T_{2\nu+1}(t)\big|}{[(2\nu+1)t]^{2}}.
\end{align*}

\textit{Upper bound for $S_{1}(\alpha)$}: By~\eqref{To}, we have
\begin{equation}\label{S1}
S_{1}(\alpha)=\sup_{t\in \big(0,\frac{\alpha}{2\nu+1}\big]}\frac{\left|Q_{\nu}(t^{2})\right|}{(2\nu+1)^{2}t}.
\end{equation}
For any nonnegative integer $k$ $(k\leq \nu)$, we define
\begin{displaymath}
R_{k}(x):=2P_{k}(x)-4(-1)^{k}(2k+1)x,
\end{displaymath}
where $P_{k}(\cdot)$ is given by~\eqref{Te}. In view of~\eqref{ex_P} and~\eqref{To}, we have
\begin{align*}
R_{k}(t^{2})&=-2P_{k-1}(t^{2})+4t^{2}Q_{k-1}(t^{2})+(-1)^{k-1}16kt^{2}\\
&=-R_{k-1}(t^{2})+4t^{2}Q_{k-1}(t^{2})+(-1)^{k-1}4(2k-1)t^{2}+(-1)^{k-1}8t^{2}\\
&=-R_{k-1}(t^{2})+4t\big[tQ_{k-1}(t^{2})+(-1)^{k-1}(2k-1)t\big]+(-1)^{k-1}8t^{2}\\
&=-R_{k-1}(t^{2})+4tT_{2k-1}(t)+(-1)^{k-1}8t^{2},
\end{align*}
which yields
\begin{equation}\label{rec_R}
\frac{\left|R_{k}(t^{2})\right|}{t}\leq\frac{\left|R_{k-1}(t^{2})\right|}{t}+4\left|T_{2k-1}(t)\right|+8t\leq\frac{\left|R_{k-1}(t^{2})\right|}{t}+4\min\big\{1, (2k-1)t\big\}+8t.
\end{equation}
In the last inequality, we have used the estimate~\eqref{est_T}. By applying~\eqref{rec_R} recursively, we obtain
\begin{displaymath}
\frac{\left|R_{k}(t^{2})\right|}{t}\leq\frac{\left|R_{0}(t^{2})\right|}{t}+8kt+4\sum_{j=1}^{k}\min\big\{1, (2j-1)t\big\}=4(2k+1)t+4\sum_{j=1}^{k}\min\big\{1, (2j-1)t\big\}.
\end{displaymath}
In addition, from~\eqref{ex_Q}, we have
\begin{displaymath}
Q_{k}(t^{2})=(4t^{2}-1)Q_{k-1}(t^{2})-R_{k-1}(t^{2}).
\end{displaymath}
Note that $|4t^{2}-1|\leq1$ due to $t\in \big(0,\frac{\alpha}{2\nu+1}\big]$ and $\alpha\leq\frac{2\nu+1}{\sqrt{2}}$. We then have
\begin{displaymath}
\frac{\left|Q_{k}(t^{2})\right|}{t}\leq\frac{\left|Q_{k-1}(t^{2})\right|}{t}+4(2k-1)t+4\sum_{j=1}^{k-1}\min\big\{1, (2j-1)t\big\},
\end{displaymath}
which leads to
\begin{equation}\label{estQ}
\frac{\left|Q_{k}(t^{2})\right|}{t}\leq 4k^{2}t+4\sum_{i=2}^{k}\sum_{j=1}^{i-1}\min\big\{1, (2j-1)t\big\}=4k^{2}t+4\sum_{j=1}^{k-1}(k-j)\min\big\{1, (2j-1)t\big\}.
\end{equation}
Using~\eqref{S1} and~\eqref{estQ}, we obtain
\begin{align*}
S_{1}(\alpha)&\leq 4\sup_{t\in \big(0,\frac{\alpha}{2\nu+1}\big]}\frac{\nu^{2}t+\sum\limits_{j=1}^{\nu-1}(\nu-j)\min\big\{1, (2j-1)t\big\}}{(2\nu+1)^{2}}\\
&=\frac{4}{(2\nu+1)^{3}}\left(\nu^{2}\alpha+\sum_{j=1}^{\nu-1}(\nu-j)\min\big\{2\nu+1, (2j-1)\alpha\big\}\right).
\end{align*}

Due to $\alpha\geq\frac{2\nu+1}{2\big\lfloor\frac{\nu}{\beta}\big\rfloor+1}$, it follows that
\begin{displaymath}
\sum_{j=1}^{\nu-1}(\nu-j)\min\big\{2\nu+1, (2j-1)\alpha\big\}\leq\sum_{j=1}^{\big\lfloor\frac{\nu}{\beta}\big\rfloor}(\nu-j)(2j-1)\alpha+(2\nu+1)\sum_{j=\big\lfloor\frac{\nu}{\beta}\big\rfloor+1}^{\nu-1}(\nu-j).
\end{displaymath}
Since $\frac{\nu}{\beta}-1<\big\lfloor\frac{\nu}{\beta}\big\rfloor\leq\frac{\nu}{\beta}$, we have that
\begin{align*}
\sum_{j=1}^{\big\lfloor\frac{\nu}{\beta}\big\rfloor}(\nu-j)(2j-1)\alpha&=\left(\nu\sum_{j=1}^{\big\lfloor\frac{\nu}{\beta}\big\rfloor}(2j-1)-\sum_{j=1}^{\big\lfloor\frac{\nu}{\beta}\big\rfloor}(2j^{2}-j)\right)\alpha\\
&=\left(\nu\Big\lfloor\frac{\nu}{\beta}\Big\rfloor^{2}+\frac{\big\lfloor\frac{\nu}{\beta}\big\rfloor\big(\big\lfloor\frac{\nu}{\beta}\big\rfloor+1\big)}{2}-\frac{\big\lfloor\frac{\nu}{\beta}\big\rfloor\big(\big\lfloor\frac{\nu}{\beta}\big\rfloor+1\big)\big(2\big\lfloor\frac{\nu}{\beta}\big\rfloor+1\big)}{3}\right)\alpha\\
&\leq\frac{\nu\left[(6\beta-4)\nu^{2}+9\beta\nu+\beta^{2}\right]}{6\beta^{3}}\alpha
\end{align*}
and
\begin{displaymath}
\sum_{j=\big\lfloor\frac{\nu}{\beta}\big\rfloor+1}^{\nu-1}(\nu-j)=\frac{\big(\nu-\big\lfloor\frac{\nu}{\beta}\big\rfloor\big)\big(\nu-\big\lfloor\frac{\nu}{\beta}\big\rfloor-1\big)}{2}\leq\frac{\nu\left[(\beta-1)^{2}\nu+\beta^{2}-\beta\right]}{2\beta^{2}}.
\end{displaymath}
Hence,
\begin{equation}
S_{1}(\alpha)\leq\frac{(12\beta-8)\nu^{3}+(12\beta^{3}+18\beta)\nu^{2}+2\beta^{2}\nu}{3\beta^{3}(2\nu+1)^{3}}\alpha+\frac{2(\beta-1)^{2}\nu^{2}+2\beta(\beta-1)\nu}{\beta^{2}(2\nu+1)^{2}}.
\end{equation}

Let
\begin{align*}
u_{1}(\nu)&=\frac{(12\beta-8)\nu^{3}+(12\beta^{3}+18\beta)\nu^{2}+2\beta^{2}\nu}{3\beta^{3}(2\nu+1)^{3}},\\ u_{2}(\nu)&=\frac{2(\beta-1)^{2}\nu^{2}+2\beta(\beta-1)\nu}{\beta^{2}(2\nu+1)^{2}}.
\end{align*}
Direct computations yield
\begin{align*}
u_{1}'(\nu)&=\frac{-24(\beta^{3}+1)\nu^{2}+\beta(24\beta^{2}-8\beta+36)\nu+2\beta^{2}}{3\beta^{3}(2\nu+1)^{4}}, &\lim_{\nu\rightarrow+\infty}u_{1}(\nu)&=\frac{3\beta-2}{6\beta^{3}}, \\
u_{2}'(\nu)&=\frac{-4(\beta-1)\nu+2\beta(\beta-1)}{\beta^{2}(2\nu+1)^{3}}, &\lim_{\nu\rightarrow+\infty}u_{2}(\nu)&=\frac{(\beta-1)^{2}}{2\beta^{2}}.
\end{align*}
Here, $u_{i}'(\nu)$ denotes the derivative of $u_{i}(\nu)$ with respect to $\nu$.
Taking $\beta=\frac{5}{2}$, we have
\begin{align*}
u_{1}(\nu)&=\frac{176\nu^{3}+1860\nu^{2}+100\nu}{375(2\nu+1)^{3}}, &u_{1}'(\nu)&=\frac{-3192\nu^{2}+3320\nu+100}{375(2\nu+1)^{4}}, &\lim_{\nu\rightarrow+\infty}u_{1}(\nu)&=\frac{22}{375},\\
u_{2}(\nu)&=\frac{18\nu^{2}+30\nu}{25(2\nu+1)^{2}}, &u_{2}'(\nu)&=\frac{-24\nu+30}{25(2\nu+1)^{3}}, &\lim_{\nu\rightarrow+\infty}u_{2}(\nu)&=\frac{9}{50}.
\end{align*}
Obviously, $u_{1}'(\nu)<0$ and $u_{2}'(\nu)<0$ if $\nu\geq31$. Thus, if $\nu\geq31$, we have
\begin{equation}\label{part1}
S_{1}(\alpha)\leq u_{1}(31)\alpha+u_{2}(31)=\frac{2344592}{31255875}\alpha+\frac{6076}{33075}.
\end{equation}

\textit{Upper bound for $S_{2}(\alpha)$}: On the other hand, it is easy to see that
\begin{equation}\label{part2}
S_{2}(\alpha)\leq\sup_{t\in \big(\frac{\alpha}{2\nu+1},1\big]}\frac{1}{(2\nu+1)t}+\sup_{t\in \big(\frac{\alpha}{2\nu+1},1\big]}\frac{\left|T_{2\nu+1}(t)\right|}{[(2\nu+1)t]^{2}}\leq\frac{1}{\alpha}+\frac{1}{\alpha^{2}}.
\end{equation}

Combining~\eqref{decom}, \eqref{part1}, and \eqref{part2}, we arrive at
\begin{displaymath}
\frac{C_{\nu}}{2\nu+1}\leq\max\left\{\frac{2344592}{31255875}\alpha+\frac{6076}{33075}, \frac{1}{\alpha}+\frac{1}{\alpha^{2}}\right\}=:U(\alpha).
\end{displaymath}
Straightforward calculations yield that
\begin{displaymath}
\alpha_{\ast}=\mathop{\arg\min}_{\alpha>1}U(\alpha)\approx 3.142703993650 \quad \text{and} \quad U(\alpha_{\ast})\approx 0.419446860530<\frac{4}{9}.
\end{displaymath}
Consequently, for any positive integer $\nu$, it holds that
\begin{displaymath}
\frac{C_{\nu}}{2\nu+1}\leq\frac{4}{9}.
\end{displaymath}
This completes the proof.
\end{proof}

\begin{remark}\rm
When $\beta=\frac{5}{2}$, the constraint~\eqref{para1} reduces to
\begin{equation}\label{para2}
\frac{2\nu+1}{2\big\lfloor\frac{2\nu}{5}\big\rfloor+1}\leq\alpha\leq\frac{2\nu+1}{\sqrt{2}}.
\end{equation}
Because $\big\lfloor\frac{2\nu}{5}\big\rfloor>\frac{2\nu}{5}-1$ and $\nu\geq31$, we have that
\begin{displaymath}
\frac{2\nu+1}{2\big\lfloor\frac{2\nu}{5}\big\rfloor+1}<\frac{10\nu+5}{4\nu-5}\leq\frac{315}{119}\approx 2.647058823529 \quad \text{and} \quad \frac{2\nu+1}{\sqrt{2}}\geq\frac{63}{\sqrt{2}}\approx 44.547727214752.
\end{displaymath}
It is clear that the minimizer $\alpha_{\ast}\approx 3.142703993650$ satisfies~\eqref{para2}.
\end{remark}

\begin{remark}\rm	
Finally, we remark that there is a small mistake in the proofs of~\cite[Proposition A2]{Brezina2012} and~\cite[Proposition 2.2]{Guo2014}. More specifically, the statement $R_{0}(t^{2})=0$ does not hold. In fact, $R_{0}(t^{2})=-4t^{2}$.
\end{remark}

\section*{Acknowledgments}
The authors would like to thank the anonymous referee for his/her valuable comments and suggestions, which greatly improved the original version of this paper. This work was supported by the National Key Research and Development Program of China (Grant No. 2016YFB0201304) and the Key Research Program of Frontier Sciences of CAS. 

\bibliographystyle{abbrv}
\bibliography{references}

\end{document}